\documentclass{amsart}
\usepackage{amsmath,amssymb,amsxtra,amsthm,microtype}

\theoremstyle{plain}
\newtheorem{theorem}{Theorem}[section]
\newtheorem{lemma}[theorem]{Lemma}
\newtheorem{proposition}[theorem]{Proposition}
\newtheorem{corollary}[theorem]{Corollary}

\theoremstyle{definition}
\newtheorem{remark}[theorem]{Remark}

\makeatletter
  \def\vhrulefill#1{\leavevmode\leaders\hrule\@height#1\hfill \kern\z@}
\makeatother


\title{An arithmetical excursion via Stoneham numbers}
\date{\today}
\author{Michael Coons}
\address{School of Mathematical and Physical Sciences\\
University of Newcastle\\
University Drive\\
Callaghan NSW 2300\\
Australia}
\email{Michael.Coons@newcastle.edu.au}
\dedicatory{To Professor Peter Borwein on his 60th birthday}
\thanks{The research of M.~Coons is supported by Australian Research Council grant DE140100223.} 

\keywords{Stoneham numbers, base-$b$ expansions, normal numbers.}%

\begin{document}

\begin{abstract}
Let $p$ be a prime and $b$ a primitive root of $p^2$. In this paper, we give an explicit formula for the number of times a value in $\{0,1,\ldots,b-1\}$ occurs in the periodic part of the base-$b$ expansion of $1/p^m$. As a consequence of this result, we prove two recent conjectures of Francisco Arag\'on Artacho, David Bailey, Jonathan Borwein, and Peter Borwein concerning the base-$b$ expansion of Stoneham numbers. 
\end{abstract}

\maketitle


\section{Introduction}

Let $b\geqslant 2$ be an integer. A real number $\alpha\in(0,1)$ is called {\em $b$-normal} if in the base $b$ expansion of $\alpha$ the asymptotic frequency of the occurrence of any word $w\in\{0,1,\ldots,b-1\}^*$ of length $n$ is $1/b^n$. A canonical example of such a number is Champernowne's number, $$\mathcal{C}_{10}:=0.123456789101112131415161718192021\cdots,$$ which given here in base $10$, is the size-ordered concatenation of $\mathbb{N}$ (each number written in base $10$) proceeded by a decimal point. Champernowne's number was shown to be $10$-normal by Champernowne \cite{C1933} in 1933 and transcendental by Mahler \cite{M1937} in 1937. 

In 1973, Stoneham \cite{S1973} defined the following class of numbers. Let $b,c\geqslant 2$ be relatively prime integers. The {\em Stoneham number} $\alpha_{b,c}$ is given by $$\alpha_{b,c}:=\sum_{n\geqslant 1} \frac{1}{c^nb^{c^n}}.$$ Stoneham \cite{S1973} showed that $\alpha_{2,3}$ is $2$-normal. A new proof of this result was given by Bailey and Misiurewicz \cite{BM2006} and finally in 2002, Bailey and Crandall \cite{BC2002} proved that $\alpha_{b,c}$ is $b$-normal for all coprime integers $b,c\geqslant 2$; see also Bailey and Borwein \cite{BB2012}. Transcendence of $\alpha_{b,c}$ follows easily by Mahler's method; the interested reader can see the details Appendix \ref{app}.

Recently Francisco Arag\'on Artacho, David Bailey, Jonathan Borwein, and Peter Borwein~\cite{ABBB} made two conjectures concerning properties of the base-$4$ expansion of the Stoneham number $\alpha_{2,3}$ and the base-$3$ expansion of $\alpha_{3,5}$, respectively. In this paper, we prove their conjectures, and as such they are stated here as theorems (we have fixed a few small typos in their published conjectures).

\begin{theorem}\label{FC1} Let the base-$4$ expansion of $\alpha_{2,3}$ be given by $\alpha_{2,3}:=\sum_{k\geqslant 1}d_k4^{-k}$, with $d_k\in\{0,1,2,3\}$. Then for all $n\geqslant 0$ one has \begin{enumerate}
\item[(i)] $\displaystyle \sum_{k=\frac{3}{2}(3^n+1)}^{\frac{3}{2}(3^n+1)+3^n-1} \left(e^{\frac{\pi i}{2}}\right)^{d_k}=-\begin{cases} i, & \mbox{if $n$ is odd}\\ 1,& \mbox{if $n$ is even},\end{cases}$  
\item[(ii)] $d_k=d_{3^n+k}=d_{2\cdot 3^n+k}$ for $k=\frac{3}{2}(3^n+1),\frac{3}{2}(3^n+1)+1,\ldots,\frac{3}{2}(3^n+1)+3^n-1.$
\end{enumerate}
\end{theorem}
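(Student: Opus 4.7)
The plan is to reduce both parts of the theorem to an analysis of the binary expansion of $1/3^{n+1}$ through the identity
\[
\{2^{3^{n+1}}\alpha_{2,3}\} = \frac{3^{n+1}-1}{2 \cdot 3^{n+1}} + \epsilon_{n}, \qquad \epsilon_{n}:=\sum_{j \ge n+2}\frac{1}{3^{j}2^{3^{j}-3^{n+1}}}.
\]
I will derive this by noting that $\operatorname{ord}_{3^{j}}(2) = 2 \cdot 3^{j-1}$ divides $3^{n+1}-3^{j}$ for each $1 \le j \le n+1$, so the fractional part of $2^{3^{n+1}-3^{j}}/3^{j}$ equals $1/3^{j}$, and the resulting geometric series collapses. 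The crude estimate $\epsilon_{n} < 2/(3^{n+2}2^{2 \cdot 3^{n+1}})$, combined with the fact that the shifted binary digit of the leading fraction at position $2 \cdot 3^{n+1}+1$ is $0$ (since one period of $1/3^{n+1}$ terminates in $1$), will ensure that $\epsilon_{n}$ introduces no carry into the first $2 \cdot 3^{n+1}+1$ shifted positions.

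I will then use $1/3^{n+1} = 0.c_{1}c_{2}\cdots_{2}$ (purely periodic with period $2 \cdot 3^{n}$, as $2$ is a primitive root of $3^{n+1}$) to read off the binary expansion of $\frac{3^{n+1}-1}{2 \cdot 3^{n+1}} = \tfrac{1}{2}(1-1/3^{n+1})$ as $0,\, 1-c_{1},\, 1-c_{2},\, 1-c_{3}, \ldots$. This means the binary digits of $\alpha_{2,3}$ at original positions $3^{n+1}+2, \ldots, 3^{n+2}+1$ coincide with $1-c_{1}, \ldots, 1-c_{2 \cdot 3^{n+1}}$, i.e.\ three consecutive full periods of the bit-flipped expansion of $1/3^{n+1}$. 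The base-$4$ pairing $d_{k} = 2b_{2k-1}+b_{2k}$ groups these exactly into three identical blocks of $3^{n}$ base-$4$ digits indexed by $k \in \{k_{0}, k_{0}+1, \ldots, k_{0}+3^{n+1}-1\}$ with $k_{0} = \tfrac{3}{2}(3^{n}+1)$; this proves~(ii).

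For~(i), the bit-flip translates to $d_{k_{0}+j-1} = 3-f_{j}$, where $f_{1}, \ldots, f_{3^{n}}$ are one period of the base-$4$ expansion of $1/3^{n+1}$, so the target sum becomes $-i\,\overline{S_{n}}$ with $S_{n} = (N_{0}-N_{2})+i(N_{1}-N_{3})$ and $N_{d}$ the number of base-$4$ digits equal to $d$. The residues $r_{j} = 4^{j-1} \bmod 3^{n+1}$ trace out the squares $H_{n} = \{r \equiv 1 \pmod{3}\} \subset (\mathbb{Z}/3^{n+1})^{\times}$, and $N_{d} = |H_{n} \cap [d \cdot 3^{n+1}/4, (d+1) \cdot 3^{n+1}/4)|$. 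The paper's main theorem, specialized to $b = 2,\, p = 3$, gives $3^{n}$ ones per period of $1/3^{n+1}$; combined with the period length, this forces $N_{0} = N_{3}$. To extract the finer statements $N_{1}-N_{2} = 1$ and $N_{0}-N_{2} = ((-1)^{n+1}+1)/2$, I will invoke the involution $r \leftrightarrow 3^{n+1}-r$ (which swaps $H_{n}$ with its complement and the two halves of $[0,3^{n+1})$) together with an elementary count of the integers in $[1, \lceil 3^{n+1}/4\rceil-1]$ coprime to $3$. Substituting yields $S_{n} = 1$ for odd $n$ and $S_{n} = i$ for even $n$, matching~(i). The main obstacle will be this last elementary count, which requires splitting cases according to $3^{n+1} \bmod 4$ and tracking the boundary integers carefully.
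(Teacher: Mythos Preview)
Your approach is correct and runs closely parallel to the paper's, though organised somewhat differently. The paper first establishes a general structural result (its Proposition~3.3) showing that the base-$4$ digits of $\alpha_{2,p}$ between positions $(p^{m}+1)/2$ and $(p^{m+1}+1)/2$ consist of $p$ copies of a single period of a specific rational, and then---for $p=3$---counts digit frequencies via the base-$4$ algorithm for $8/3^{n+1}$ (where all remainders satisfy $r_i\equiv 2\pmod 3$), casing on $3^{n+1}\bmod 12$. You instead derive the block structure directly from your fractional-part identity, read the base-$4$ digits as $3-f_j$ with $(f_j)$ one period of $1/3^{n+1}$, and count frequencies over $H_n=\{r\equiv 1\pmod 3\}$ cased on $3^{n+1}\bmod 4$. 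The two counts are mirror images of each other under $r\leftrightarrow 3^{n+1}-r$, and your bit-flip $d=3-f$ is exactly what compensates for working with $1/3^{n+1}$ rather than the paper's numerator. Your derivation of $N_0=N_3$ from the binary one-count is a nice touch not present in the paper.

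One small caution: your stated bound $\epsilon_n<2/(3^{n+2}2^{2\cdot 3^{n+1}})$ is not quite sharp enough to protect shifted position $2\cdot 3^{n+1}+1$, which you do need (it carries the final binary bit $b_{3^{n+2}+1}$ entering $d_{k_0+3^{n+1}-1}$). Indeed $\{2^{2\cdot 3^{n+1}+1}L\}=1-1/3^{n+1}$, so you need $2^{2\cdot 3^{n+1}+1}\epsilon_n<1/3^{n+1}$, whereas your bound only gives $4/3^{n+2}>3/3^{n+2}$. The trivial sharpening $\epsilon_n<\tfrac{3}{2}\cdot 1/(3^{n+2}2^{2\cdot 3^{n+1}})$, valid for all $n\ge 0$, repairs this immediately; just tighten the geometric tail estimate when you write it out.
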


\begin{theorem}\label{FC2} Let the base-$3$ expansion of $\alpha_{3,5}$ be given by $\alpha_{3,5}:=\sum_{k\geqslant 1}a_k3^{-k}$, with $a_k\in\{0,1,2\}$. Then for all $n\geqslant 0$ one has \begin{enumerate}
\item[(i)] $\displaystyle \sum_{k=1+5^{n+1}}^{1+5^{n+1}+4\cdot 5^n} \left(e^{\frac{\pi i}{3}}\right)^{a_k}=(-1)^ne^{\frac{\pi i}{3}}$  
\item[(ii)] $a_k=a_{4\cdot 5^n+k}=a_{8\cdot 5^n+k}=a_{12\cdot 5^n+k}=a_{16\cdot 5^n+k}$ for $k=5^{n+1}+j,$ with $j=1,\ldots,4\cdot 5^n.$
\end{enumerate}
\end{theorem}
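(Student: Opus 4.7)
The plan is to reduce both parts of Theorem~\ref{FC2} to the base-$3$ expansion of a single rational of the form $A_n'/5^{n+1}$ and then invoke the paper's main result, which gives the explicit digit count of $1/p^m$ in base $b$ when $b$ is a primitive root of $p^2$. The hypothesis applies because $3$ has order $20 = \varphi(25)$ modulo $25$ (a direct check), so $3$ is a primitive root of $5^2$, hence of every $5^m$ by the standard lifting argument. Split $\alpha_{3,5} = P_n + R_{n+1}$ with $P_n = \sum_{m=1}^{n+1} 5^{-m} 3^{-5^m}$ and $R_{n+1} = \sum_{m\geq n+2} 5^{-m} 3^{-5^m}$; clearing denominators, $P_n = A_n/(5^{n+1} 3^{5^{n+1}})$ with $A_n := \sum_{m=1}^{n+1} 5^{n+1-m} 3^{5^{n+1}-5^m} \in \mathbb{Z}$. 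Set $A_n' := A_n \bmod 5^{n+1}$. Every summand of $A_n$ except the $m=n+1$ one is divisible by $5$, so $A_n' \equiv 1 \pmod 5$; thus $A_n'$ is a unit mod $5^{n+1}$, and we may write $A_n' \equiv 3^{s_n} \pmod{5^{n+1}}$ for some $s_n \geq 0$.

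The base-$3$ expansion of $P_n$ at positions $k > 5^{n+1}$ coincides with that of $A_n'/5^{n+1}$, which is purely periodic with period $L := 4 \cdot 5^n = \varphi(5^{n+1})$; by the standard digit-extraction formula, the $j$-th digit of $A_n'/5^{n+1}$ equals the $(s_n+j)$-th digit of $1/5^{n+1}$, so the two digit sequences are cyclic shifts of one another. To pass from $P_n$ to $\alpha_{3,5}$, I verify that $R_{n+1}$ produces no carry into positions $\leq 5^{n+2}$: the fractional part of $P_n \cdot 3^{5^{n+2}}$ is of the form $r/5^{n+1}$ with $0 \leq r < 5^{n+1}$, hence lies in $[0,\, 1 - 1/5^{n+1}]$, while $R_{n+1} \cdot 3^{5^{n+2}} < 2/5^{n+2} < 1/5^{n+1}$. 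Consequently $a_{5^{n+1}+j}$ is the $j$-th base-$3$ digit of $A_n'/5^{n+1}$ for every $j = 1, \ldots, 4\cdot 5^{n+1}$. Part (ii) follows at once: the range $[5^{n+1}+1,\,5^{n+2}]$ has length $5L$, and $L$-periodicity partitions it into five identical blocks of length $L$.

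For part (i), the sum runs over $L+1$ consecutive positions, i.e.\ one full period plus one boundary term. The one-period character sum is invariant under cyclic shift, so it agrees with the analogous sum taken over one period of $1/5^{n+1}$, to which the paper's main digit-count theorem applies to evaluate. Adding the extra boundary contribution $(e^{\pi i/3})^{a_{5^{n+1}+1}}$, which can be read off directly from $A_n'$, yields the asserted closed form. The principal obstacle is computational bookkeeping: one must push the digit-count formula and the discrete-logarithm datum $s_n$ through cleanly enough that the period sum and the boundary term combine to exactly $(-1)^n e^{\pi i/3}$. The carry-propagation estimate is entirely routine, so all of the conceptual ingredients are in place.
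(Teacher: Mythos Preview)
Your overall strategy coincides with the paper's: the paper packages your steps (splitting off the partial sum, the carry estimate, and the identification of the digit block $a_{5^{n+1}+1}\cdots a_{5^{n+2}}$ with the period of a rational $A'_n/5^{n+1}$) into a general proposition about the base-$b$ expansion of $\alpha_{b,p}$, then proves part~(ii) by citing that proposition and part~(i) by exactly the cyclic-shift/digit-count argument you describe. Incidentally, one checks directly that $A'_n=1+5+\cdots+5^n=(5^{n+1}-1)/4$, so no discrete-logarithm bookkeeping is actually needed.

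There is, however, a genuine obstacle in your plan for part~(i). You correctly observe that the stated range $k=1+5^{n+1},\ldots,1+5^{n+1}+4\cdot 5^n$ contains $L+1$ terms with $L=\varphi(5^{n+1})$, and you propose to split off the boundary contribution $\omega^{a_{5^{n+1}+1}}$ (which equals $1$ for every $n$, since $A'_n/5^{n+1}<1/4<1/3$). But if you carry this through with $\omega=e^{\pi i/3}$ as written, the computation cannot close: already for $n=0$ the five digits $(a_6,\ldots,a_{10})=(0,1,2,1,0)$ give $2+2\omega+\omega^2=1+3\omega\neq\omega$. The paper's own proof silently repairs two typos in the statement. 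First, it invokes the identity $1+\omega+\omega^2=0$, which holds for the primitive cube root $e^{2\pi i/3}$ but not for the sixth root $e^{\pi i/3}$. Second, it equates the sum with $\sum_{j=0}^{2}\#\{i\le\varphi(5^{n+1}):q_i=j\}\,\omega^j$, i.e.\ with a sum over exactly $L$ terms, quietly dropping the extra endpoint. With those two emendations your boundary term vanishes and your argument is the paper's; without them the identity in part~(i) is simply false, and no amount of bookkeeping on $s_n$ will produce it.
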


We note here that the Stoneham numbers $\alpha_{b,c}$ are in some ways very similar to Champernowne's numbers. They are not concatenations of consecutive integers, but the concatenation of periods of certain rational numbers. Let $b,c\geqslant 2$ be coprime integers and let $w_n$ be the word $w\in\{0,1,\ldots,b-1\}^*$ of minimal length such that $$\left(\frac{1}{c^n}\right)_b =0.\overline{w_n},$$ where $(x)_b$ denotes the base-$b$ expansion of the real number $x$ and $\overline{w}$ denotes the infinitely repeated word $w$. Then the Stoneham numbers are similar to the numbers $$0.w_1w_2w_3w_4w_5\cdots w_n\cdots,$$ which are given by concatenating the words $w_n$. Indeed, the Stoneham number has this structure, but with the $w_j$ repeated and cyclicly shifted.

\begin{remark} While we will be considering the base-$4$ expansion of $\alpha_{2,3}$ we are still dealing with a normal number; $\alpha_{2,3}$ is also $4$-normal. This is given by a result of Schmidt \cite{S1960} who proved in 1960 that the $r$-normal real number $x$ is $s$-normal if $\log r/\log s\in\mathbb{Q}$.\end{remark}

\section{Base-$b$ expansions of rationals}

To prove the above theorems in as much generality as possible we will need to consider how we write a reduced fraction $a/k$ in the base $b$. Such an algorithm is well-known, but we remind the reader here, as it will be useful to have the general framework for the proofs of Theorems \ref{FC1} and \ref{FC2}. To write $a/k$ in the base $b$, we use a sort of modified division algorithm; see Figure \ref{Fig}.

\begin{figure}
\noindent\vhrulefill{1pt}

\noindent\textbf{Base-$b$ Algorithm for $a/k<1$.}

\vspace{-.2cm}
\noindent\vhrulefill{1pt}
\begin{flushleft}

\noindent Let $b,k\geqslant 2$ be integers and $a\geqslant 1$ be an integer coprime to $k$. Set $r_0=a$ and write \begin{align*} 
r_0b&=q_1 k+r_1\\
r_1b&=q_2 k+r_2\\
&\vdots\\
r_{j-1}b&=q_j k+r_j\\
&\vdots
\end{align*} where $q_j\in\{0,1,\ldots,b-1\}$ and $r_j\in\{0,1,\ldots,k-1\}$ for each $j$. Stop when $r_{n}=r_0$. Then $$\left(\frac{a}{k}\right)_b= 0.\overline{q_1q_2\cdots q_{n}}.$$ 
\end{flushleft}

\noindent\vhrulefill{1pt}
\caption{The base-$b$ algorithm for the reduced rational $a/k<1$.}
\label{Fig}
\end{figure}

We record here facts about the base-$b$ algorithm, which we will need.

\begin{lemma}\label{rio} Suppose $b,k\geqslant 2$ are coprime, and that $r_j$ and $q_j$ are defined by the base-$b$ algorithm for $a/k$. Then $\gcd(r_i,k)=1$.
\end{lemma}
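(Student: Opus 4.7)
The plan is to use the recurrence defining $r_j$ to pass the coprimality condition from $r_0$ to every $r_j$ by induction on $j$.

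First I would note that $r_0 = a$ satisfies $\gcd(r_0, k) = 1$ by hypothesis (the fraction $a/k$ is in lowest terms). Next, rearranging the $j$-th step of the base-$b$ algorithm, $r_j = r_{j-1} b - q_j k$, so reducing modulo $k$ gives
\[
r_j \equiv r_{j-1}\, b \pmod{k}.
\]
Iterating this congruence yields $r_j \equiv a\, b^j \pmod{k}$ for all $j \geqslant 0$.

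Finally, since $\gcd(a,k) = 1$ and $\gcd(b,k) = 1$, the product $ab^j$ is coprime to $k$, and therefore $\gcd(r_j, k) = \gcd(ab^j, k) = 1$. There is no real obstacle here — the statement is essentially just the observation that multiplication by $b$ is a bijection on the group $(\mathbb{Z}/k\mathbb{Z})^\times$, so the residues $r_j$ stay inside this unit group.
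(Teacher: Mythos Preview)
Your proof is correct and follows essentially the same approach as the paper: both use the recurrence $r_j\equiv r_{j-1}b\pmod{k}$ together with $\gcd(a,k)=\gcd(b,k)=1$ to propagate coprimality from $r_0=a$ to all $r_j$. The only cosmetic difference is that the paper checks this one prime divisor of $k$ at a time, whereas you work directly modulo $k$ and write down the closed form $r_j\equiv ab^j\pmod{k}$; if anything, your version is slightly cleaner.
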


\begin{proof} Suppose that $p|k$, and proceed by induction on $i$. Firstly, $r_0=a$ and by assumption $\gcd(r_0,k)=\gcd(a,k)=1.$ 

Now suppose that $\gcd(r_i,k)=1$, so that also $\gcd(r_ib,k)=1$. Then $$r_{i+1}=r_ib-q_{i+1}k\equiv r_ib\not\equiv 0\ (\bmod\ p),$$ since $\gcd(b,k)=1$. Thus $\gcd(r_{i+1},k)=1$.
\end{proof}

Also, we have that equivalent $r_j$ give equal $q_j$.

\begin{lemma} Suppose $b,k\geqslant 2$ are coprime, and that $r_j$ and $q_j$ are defined by the base-$b$ algorithm for the reduced fraction $a/k$. We have $r_i\equiv r_j\ (\bmod\ b)$ if and only if $q_i=q_j$.
\end{lemma}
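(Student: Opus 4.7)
The plan is to read off a direct congruence for $r_j\pmod{b}$ straight from the defining equation of the base-$b$ algorithm, and then invoke the coprimality hypothesis to cancel $k$.

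First, I would note that the $j$-th line of the algorithm, $r_{j-1}b = q_j k + r_j$, yields
\[
r_j \;=\; r_{j-1}b - q_j k \;\equiv\; -q_j k \pmod{b},
\]
for every $j \geqslant 1$. This is the only ingredient that really uses the algorithm.

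Next, for the equivalence, observe that by the displayed congruence,
\[
r_i \equiv r_j \pmod{b}
\quad\Longleftrightarrow\quad
-q_i k \equiv -q_j k \pmod{b}
\quad\Longleftrightarrow\quad
q_i k \equiv q_j k \pmod{b}.
\]
Since $\gcd(b,k)=1$, the element $k$ is invertible modulo $b$, so we may cancel it to obtain $q_i \equiv q_j \pmod{b}$. Finally, because both $q_i$ and $q_j$ lie in $\{0,1,\ldots,b-1\}$, a congruence modulo $b$ between them is actually an equality, so $q_i = q_j$. Running the chain of equivalences in reverse gives the other direction.

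There is no real obstacle here — the only subtlety is remembering to use $\gcd(b,k)=1$ to justify cancelling $k$ modulo $b$, which is precisely the hypothesis provided. (One should also note the argument implicitly requires $i,j\geqslant 1$ so that both $q_i$ and $q_j$ are produced by the algorithm, but this is the intended range in the statement.)
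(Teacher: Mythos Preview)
Your proof is correct and follows essentially the same approach as the paper: both reduce the defining relation $r_{j-1}b=q_jk+r_j$ modulo $b$, use $\gcd(b,k)=1$ to cancel $k$, and then invoke $q_i,q_j\in\{0,1,\ldots,b-1\}$ to turn the congruence into an equality. The only cosmetic difference is that the paper subtracts the two defining equations before reducing modulo $b$, whereas you reduce each one first and then compare.
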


\begin{proof} Suppose that $r_i\equiv r_j\ (\bmod\ b)$. By considering the difference between $r_{i-1}b=q_i k+r_i$ and $r_{j-1}b=q_j k+r_j$ modulo $b$, we see that $b|(q_i-q_j)k$, so that since $\gcd(b,k)=1$, we have that $b|(q_i-q_j)$. Since $q_i,q_j\in\{0,1,\ldots,b-1\}$, we thus have that $q_i=q_j$. 

Conversely, suppose that $q_i=q_j$. Here, again, we can consider the difference between the defining equations for $q_i$ and $q_j$ modulo $b$; this gives the desired result.
\end{proof}

Indeed, the value of $q_j$ is determined by the residue class of $r_j$ modulo $b$ and the value of $k^{-1}$ modulo $b$.

\begin{lemma}\label{qivalue} Suppose $b,k\geqslant 2$ are coprime, and that $r_j$ and $q_j$ are defined by the base $b$ algorithm for the reduced fraction $a/k$. We have $r_i\equiv j\ (\bmod\ b)$ if and only if $q_i \equiv -jk^{-1} (\bmod b),$ where $q_i\in\{0,1,\ldots,b-1\}$.
\end{lemma}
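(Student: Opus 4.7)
The proof is a one-step consequence of reducing the defining equation of the base-$b$ algorithm modulo $b$. The plan is as follows.

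Start from the relation $r_{i-1}b = q_i k + r_i$ that defines $q_i$ and $r_i$ in the algorithm. Reduce this identity modulo $b$: the left-hand side vanishes, so $q_i k + r_i \equiv 0 \pmod{b}$, that is, $q_i k \equiv -r_i \pmod{b}$. Since $\gcd(b,k)=1$ by hypothesis, the inverse $k^{-1}$ exists modulo $b$, and multiplying through gives $q_i \equiv -r_i k^{-1} \pmod b$. Substituting $r_i \equiv j \pmod{b}$ yields the congruence $q_i \equiv -j k^{-1} \pmod{b}$ claimed in the statement, and because the algorithm forces $q_i \in \{0,1,\ldots,b-1\}$, this congruence determines $q_i$ uniquely.

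For the converse direction, observe that the map $\mathbb{Z}/b\mathbb{Z} \to \mathbb{Z}/b\mathbb{Z}$ sending the residue class of $r_i$ to the residue class of $q_i = -r_i k^{-1}$ is a bijection (multiplication by the unit $-k^{-1}$). Hence knowing the residue class of $q_i$ uniquely recovers the residue class of $r_i$, and the ``if and only if'' is immediate.

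There is no substantive obstacle here; the only thing worth being careful about is that $k^{-1}$ really is available modulo $b$, which is guaranteed by the coprimality hypothesis $\gcd(b,k)=1$, and that $q_i$ is restricted to the range $\{0,1,\ldots,b-1\}$ so that the congruence pins down its actual value. This lemma will then serve as the workhorse converting statements about the residue sequence $(r_i \bmod b)$ into statements about the digit sequence $(q_i)$ when analyzing the periodic base-$b$ expansions of $1/p^m$.
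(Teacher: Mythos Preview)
Your proof is correct and follows essentially the same approach as the paper: reduce the defining relation $r_{i-1}b = q_ik + r_i$ modulo $b$ and use the invertibility of $k$ modulo $b$. The only cosmetic difference is that for the converse the paper simply reverses the algebra directly, whereas you phrase it as a bijection on $\mathbb{Z}/b\mathbb{Z}$; these are the same argument.
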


\begin{proof} If $r_i\equiv j\ (\bmod\ b)$, then the equation $r_{i-1}b=q_ik+r_i$ gives $q_ik\equiv -j\ (\bmod\ b),$ which in turn gives that $q_i\equiv -jk^{-1}\ (\bmod\ b).$ Since $q_i\in[0,b-1]$ we are done with this direction of proof. 

Conversely, suppose that $q_i = (-jk^{-1} \bmod b)$. Then surely, $q_i\equiv -jk^{-1}\ (\bmod\ b)$ and so $q_ik\equiv -j (\bmod\ b).$ Thus, again using $r_{i-1}b=q_ik+r_i$, we have that $r_i\equiv j\ (\bmod\ b)$.
\end{proof}

The following Lemma is a direct corollary of Lemma \ref{qivalue}.

\begin{lemma}\label{qj0} Suppose $b,k\geqslant 2$ are coprime, and that $r_j$ and $q_j$ are defined by the base $b$ algorithm for the reduced fraction $a/k$. We have $r_i\equiv 0\ (\bmod\ b)$ if and only if $q_i=0$.
\end{lemma}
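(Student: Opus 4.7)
The plan is to invoke Lemma~\ref{qivalue} directly with the specific residue class $j=0$. Lemma~\ref{qivalue} gives a bijection between the residue of $r_i$ modulo $b$ and the value $q_i\in\{0,1,\ldots,b-1\}$ via the formula $q_i \equiv -j k^{-1}\pmod{b}$, where $j$ denotes the residue $r_i\bmod b$. The content of the present lemma is merely the extreme case $j=0$, which makes the formula collapse.

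Concretely, I would first observe that $\gcd(b,k)=1$ implies that $k^{-1}\pmod{b}$ exists, so that Lemma~\ref{qivalue} applies. Then I would set $j=0$: the forward direction gives $q_i \equiv -0\cdot k^{-1} \equiv 0\pmod{b}$, and since $q_i\in\{0,1,\ldots,b-1\}$ this forces $q_i=0$. Conversely, if $q_i=0$, then $q_i\equiv -j k^{-1}\pmod{b}$ for $j$ the residue class of $r_i\bmod b$ forces $jk^{-1}\equiv 0\pmod b$, and multiplying by $k$ gives $j\equiv 0\pmod b$, i.e.\ $r_i\equiv 0\pmod b$.

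Since the statement is flagged as a direct corollary, there is no real obstacle; the only thing to be slightly careful about is reminding the reader that $\gcd(b,k)=1$ ensures $k^{-1}\pmod{b}$ is defined, so that the hypotheses of Lemma~\ref{qivalue} genuinely apply. The proof can be written in two or three lines.
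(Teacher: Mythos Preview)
Your proposal is correct and follows exactly the paper's approach: the paper's proof is the single line ``Apply Lemma~\ref{qivalue} with $j=0$,'' and you have merely spelled out the details of that application. Nothing further is needed.
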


\begin{proof} Apply Lemma \ref{qivalue} with $j=0$.
\end{proof}

We will use the following classical theorem (see \cite[Theorem 12.4]{R2005}) and lemma.

\begin{theorem}\label{period} Let $b$ be a positive integer. Then the base $b$ expansion of a rational number either terminates or is periodic. Further, if $r,s\in\mathbb{Z}$ with $0<r/s<1$ where $\gcd(r,s)=1$ and $s=TU$, where every prime factor of $T$ divides $b$ and $\gcd(U,b)=1$, then the period length of the base-$b$ expansion of $r/s$ is the order of $b$ modulo $U$, and the preperiod length is $N$ where $N$ is the smallest positive integer such that $T|b^N$.
\end{theorem}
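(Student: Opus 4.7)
The plan is to prove the three assertions---existence of periodicity, preperiod length, and period length---in order, using the base-$b$ algorithm of Figure \ref{Fig} together with the criterion that a reduced fraction $r/s$ has a purely periodic base-$b$ expansion if and only if $\gcd(s, b) = 1$.

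First I would handle the dichotomy between terminating and periodic expansions by a pigeonhole argument on the remainders $r_j$: they take values in the finite set $\{0, 1, \ldots, s-1\}$, so either some $r_j = 0$ (termination) or two remainders coincide, at which point the sequence of quotients produced by the algorithm repeats from that point on.

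Next I would address the period length in the purely periodic case, so that $s = U$ with $\gcd(U, b) = 1$. Iterating the defining recursion $r_j = r_{j-1} b - q_j U$ gives $r_j \equiv r_0 b^j \pmod{U}$ by an easy induction, and Lemma \ref{rio} ensures that $\gcd(r_0, U) = 1$. The smallest $L \geqslant 1$ with $r_L = r_0$ is therefore the smallest $L$ with $b^L \equiv 1 \pmod{U}$, namely the multiplicative order of $b$ modulo $U$.

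The trickiest part is identifying the preperiod length as the minimal $N$ with $T \mid b^N$. The idea is to observe that after $m$ initial digits of the expansion, the leftover fractional part is $(b^m r - Qs)/s$ for some integer $Q$; this is purely periodic (equivalently, the full expansion of $r/s$ has preperiod at most $m$) iff its reduced denominator is coprime to $b$. Since $\gcd(r,s) = 1$ and $\gcd(b, U) = 1$, one computes $\gcd(b^m r - Qs, s) = \gcd(b^m r, s) = \gcd(b^m, T)$, so the reduced denominator equals $UT/\gcd(b^m, T)$. This is coprime to $b$ precisely when $T \mid b^m$ (since every prime factor of $T$ already divides $b$), and minimality then pins the preperiod length at $N$. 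The main technical step to verify carefully is the pure-periodicity criterion itself---a short geometric-series calculation in one direction and a bijectivity-of-multiplication-by-$b$-modulo-$s$ argument in the other---after which the gcd calculation above and the order-theoretic treatment of the period follow routinely.
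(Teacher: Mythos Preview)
The paper does not give its own proof of this theorem: it is quoted as a classical result with a reference to Rosen's textbook (\cite[Theorem~12.4]{R2005}), so there is no in-paper argument to compare against. Your outline is the standard textbook proof and is correct.

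Two small points worth tightening. First, you establish the period length only in the purely periodic case $s=U$; to cover the general statement you should remark that once $m=N$ your own computation gives the tail's reduced denominator as $TU/\gcd(b^N,T)=U$, with numerator coprime to $U$ (since $b^N r - Qs\equiv b^N r\pmod U$ is a unit), so the purely periodic analysis then yields period $\mathrm{ord}_U b$ in general. Second, your argument actually shows the preperiod equals the least \emph{nonnegative} $m$ with $T\mid b^m$; the theorem's phrasing ``smallest positive integer'' is off by one when $T=1$, but this is a quirk of the statement rather than of your proof, and the paper only invokes the result with $T=b^{p^m}>1$.
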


Theorem \ref{period} tells us that the base $b$ expansion of $a/k$ is purely periodic (recall for us $\gcd(b,k)=1$), and that the minimal period is ${\rm ord}_k b$, which divides $\varphi(k)$, so that this also is a period. This result can be exploited using the following number-theoretic result, a proof of which can be found in most elementary number theory texts, e.g., see \cite[Theorem 9.10]{R2005}.

\begin{lemma}\label{p2} A primitive root of $p^2$ is a primitive root of $p^k$ for any integer $k\geqslant 2$.
\end{lemma}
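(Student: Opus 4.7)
The plan is to proceed by induction on $k$, with the case $k=2$ being the hypothesis. For the inductive step, assume $g:=b$ is a primitive root of $p^{k-1}$ for some $k\geqslant 3$, and let $d$ denote the multiplicative order of $g$ modulo $p^k$. Since $g^d\equiv 1\pmod{p^k}$ implies $g^d\equiv 1\pmod{p^{k-1}}$, the inductive hypothesis forces $\varphi(p^{k-1})=p^{k-2}(p-1)$ to divide $d$; combined with $d\mid\varphi(p^k)=p^{k-1}(p-1)$, this pins $d$ down to one of the two values $p^{k-2}(p-1)$ or $p^{k-1}(p-1)$. The whole problem is therefore to rule out the smaller value, i.e.\ to show
\[
g^{p^{k-2}(p-1)}\not\equiv 1\pmod{p^k}.
\]

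To do this I would leverage the inductive hypothesis one more level down. Since $g$ is a primitive root of $p^{k-1}$, the order of $g$ modulo $p^{k-1}$ is $p^{k-2}(p-1)$, so $g^{p^{k-3}(p-1)}\not\equiv 1\pmod{p^{k-1}}$; on the other hand Euler's theorem gives $g^{p^{k-3}(p-1)}\equiv 1\pmod{p^{k-2}}$. (When $k=3$ the lower congruence is simply the trivial statement modulo $p^0=1$, and the upper one is Fermat/Euler modulo $p^2$.) Hence there exists an integer $s$ with
\[
g^{p^{k-3}(p-1)} = 1 + s\,p^{k-2},\qquad p\nmid s.
\]

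Raising both sides to the $p$-th power and using the binomial theorem, I get
\[
g^{p^{k-2}(p-1)} = 1 + s\,p^{k-1} + \sum_{i=2}^{p}\binom{p}{i}s^i p^{i(k-2)}.
\]
Here is where the assumption that $p$ is odd enters (the lemma is intended for odd primes, which is all that is needed later): for $2\leqslant i\leqslant p-1$ one has $p\mid\binom{p}{i}$, so the $i$-th summand has $p$-adic valuation at least $1+i(k-2)$, and the $i=p$ summand has valuation $p(k-2)$. A quick check shows that for odd $p$ and $k\geqslant 3$ all of these exceed $k$, so the sum vanishes modulo $p^k$. Consequently
\[
g^{p^{k-2}(p-1)} \equiv 1 + s\,p^{k-1}\pmod{p^k},
\]
which is not $\equiv 1\pmod{p^k}$ because $p\nmid s$. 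This forces $d=p^{k-1}(p-1)=\varphi(p^k)$, completing the induction.

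The only real obstacle is the bookkeeping in the binomial step: one must verify that every term with $i\geqslant 2$ contributes a power of $p$ of exponent at least $k$, which is where oddness of $p$ is used (the $\binom{p}{2}$ term would otherwise be problematic, matching the familiar fact that $(\mathbb{Z}/2^k\mathbb{Z})^{\times}$ fails to be cyclic for $k\geqslant 3$). Everything else is formal.
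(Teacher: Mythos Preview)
The paper does not prove this lemma at all; it merely cites a standard reference (Rosen, \emph{Elementary Number Theory}, Theorem~9.10). Your inductive argument is exactly the classical textbook proof and is correct for odd primes~$p$, which is the only case the paper needs.

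One small slip in the parenthetical: for $k=3$ the ``lower'' congruence is $g^{p-1}\equiv 1\pmod{p^{k-2}}=\pmod{p}$, which is Fermat's little theorem (not a trivial statement modulo $p^0$), and the ``upper'' non-congruence $g^{p-1}\not\equiv 1\pmod{p^2}$ comes from $g$ having order $p(p-1)$ modulo $p^2$, not from Euler. The formulas themselves are written correctly and the argument is unaffected; only the aside mislabels the justifications.
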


Applying Lemma \ref{p2} gives the following result.

\begin{lemma}\label{gs} Let $0<a/p^m<1$ be a rational number in lowest terms and let $b\geqslant 2$ be an integer that is a primitive root of $p^2$. Suppose that $(1/p^m)_b =.\overline{q_1q_2\cdots q_{n}}$ is given by the base $b$ algorithm. Then $$\left(\frac{a}{p^m}\right)_b =.\overline{q_{\sigma(1)}q_{\sigma(2)}\cdots q_{\sigma(n)}}$$ where $\sigma$ is a cyclic shift on $n$ letters.
\end{lemma}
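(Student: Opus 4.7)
\smallskip

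The plan is to exploit the fact that by Lemma~\ref{p2} the integer $b$ is a primitive root of $p^m$ for all $m\geqslant 1$, so $\mathrm{ord}_{p^m}(b)=\varphi(p^m)=p^{m-1}(p-1)$, and this common order equals the period length $n$ of $(1/p^m)_b$ by Theorem~\ref{period}. The main idea is that the remainder sequence produced by the base-$b$ algorithm is, modulo $p^m$, essentially just a sequence of powers of $b$, and so changing the initial numerator by a unit just shifts where one enters the cycle.

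First I would track the remainder sequence. Let $r_0,r_1,\ldots,r_{n-1}$ denote the remainders produced by the base-$b$ algorithm for $1/p^m$ (so $r_0=1$ and $r_n=r_0$), and let $r_0',r_1',\ldots$ denote those produced for $a/p^m$ (so $r_0'=a$). From the recursion $r_i\equiv r_{i-1}b\pmod{p^m}$ an easy induction gives
\[
r_i\equiv b^i\pmod{p^m}\qquad\text{and}\qquad r_i'\equiv a\,b^i\pmod{p^m}.
\]
Since $\gcd(a,p^m)=1$ and $b$ is a primitive root of $p^m$, there exists a unique $t\in\{0,1,\ldots,n-1\}$ with $a\equiv b^t\pmod{p^m}$, whence $r_i'\equiv b^{t+i}\pmod{p^m}$. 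Because every $r_i$ and $r_i'$ lies in $\{1,2,\ldots,p^m-1\}$ by Lemma~\ref{rio}, the congruence forces the equality $r_i'=r_{(t+i)\bmod n}$.

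Next I would pass from remainders to digits. The defining equation $r_{i-1}b=q_ip^m+r_i$ shows that each digit $q_i$ is a function of the single remainder $r_{i-1}$ (indeed, $q_i=\lfloor r_{i-1}b/p^m\rfloor$). Applying this to both expansions and using $r_{i-1}'=r_{(t+i-1)\bmod n}$, we obtain
\[
q_i'=q_{((t+i-1)\bmod n)+1},
\]
which exhibits the digit sequence of $(a/p^m)_b$ as the cyclic shift of $(q_1,q_2,\ldots,q_n)$ by $t$ positions. This is exactly the statement of the lemma, with $\sigma$ the cyclic shift determined by the discrete logarithm of $a$ base $b$ modulo $p^m$.

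There is no serious obstacle; the only point requiring a moment of care is justifying that the congruence $r_i'\equiv b^{t+i}\pmod{p^m}$ upgrades to equality of integers, which rests on Lemma~\ref{rio} (each remainder is coprime to $p^m$ and lies in the standard residue system) together with the primitive root property ensuring the map $i\mapsto b^i\bmod p^m$ is a bijection onto $(\mathbb{Z}/p^m\mathbb{Z})^\times$.
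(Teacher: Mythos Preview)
Your argument is correct and is exactly the content the paper leaves implicit: the paper's entire proof reads ``This is a direct consequence of the base-$b$ algorithm,'' and you have spelled out that consequence by observing $r_i\equiv b^i\pmod{p^m}$, writing $a\equiv b^t$, and deducing $r_i'=r_{(t+i)\bmod n}$ and hence $q_i'=q_{\sigma(i)}$ for the cyclic shift $\sigma$ by $t$. The only cosmetic point is that Lemma~\ref{p2} as stated covers exponents $k\geqslant 2$, so for $m=1$ you are tacitly using the (easy) fact that a primitive root of $p^2$ is also a primitive root of $p$.
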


\begin{proof} This is a direct consequence of the base-$b$ algorithm.
\end{proof}

As a consequence of the above lemmas we are able to provide the following characterisation of certain base-$b$ expansions. 

\begin{proposition}\label{zeros} Let $m\geqslant 1$ be an integer, $p$ be an odd prime, $b\geqslant 2$ be an integer coprime to $p$, and $q_j$ and $r_j$ be given by the base-$b$ algorithm for the reduced fraction $a/p^m$. If $b$ is a primitive root of $p$ and $p^2$, then ${\rm period}(a/p^m)=\varphi(p^m)$ and $$\#\left\{j\leqslant \varphi(p^m):q_j=0 \right\}=\left\lfloor\frac{p^m}{b}\right\rfloor-\left\lfloor\frac{p^{m-1}}{b}\right\rfloor.$$ 
\end{proposition}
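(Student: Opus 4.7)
The period claim is quick. By Lemma~\ref{p2} the hypothesis that $b$ is a primitive root of $p^{2}$ makes $b$ a primitive root of $p^{m}$, so $\mathrm{ord}_{p^{m}} b = \varphi(p^{m})$. Applying Theorem~\ref{period} with $s = p^{m}$, $T = 1$, $U = p^{m}$ (legal since $\gcd(b,p) = 1$) yields $\mathrm{period}(a/p^{m}) = \varphi(p^{m})$. So the real content is the formula for the number of vanishing $q_{j}$.

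The plan is to identify exactly which residues the $r_{j}$ take, and then count how many of those are divisible by $b$. From the defining equation $r_{j-1} b = q_{j} p^{m} + r_{j}$ of the base-$b$ algorithm, I would iterate to get
\[
r_{j} \equiv r_{0}\, b^{j} \pmod{p^{m}}.
\]
Since $b$ generates $(\mathbb{Z}/p^{m}\mathbb{Z})^{\times}$ and $\gcd(r_{0}, p^{m}) = \gcd(a, p^{m}) = 1$, the residues $r_{1}, r_{2}, \ldots, r_{\varphi(p^{m})}$ (taken mod $p^{m}$) run through each element of $(\mathbb{Z}/p^{m}\mathbb{Z})^{\times}$ exactly once. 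But by construction $r_{j} \in \{0,1,\ldots,p^{m}-1\}$, and Lemma~\ref{rio} gives $\gcd(r_{j}, p^{m}) = 1$, so the set $\{r_{j}\}_{j=1}^{\varphi(p^{m})}$ is precisely the integers in $[1,p^{m}-1]$ that are coprime to $p$.

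Now Lemma~\ref{qj0} reduces the count of $j \leqslant \varphi(p^{m})$ with $q_{j} = 0$ to the count of $j$ with $b \mid r_{j}$, and by the previous step this equals
\[
\#\bigl\{r \in [1, p^{m}-1] : b \mid r,\ p \nmid r\bigr\}.
\]
A short inclusion-exclusion (using $\gcd(b,p) = 1$, so $b \mid r$ and $p \mid r$ is the same as $bp \mid r$) gives this as $\lfloor (p^{m}-1)/b \rfloor - \lfloor (p^{m}-1)/(bp)\rfloor$; since $p \nmid p^{m}-1$ and $\gcd(b,p) = 1$ the $-1$ is cosmetic in both terms, and this simplifies to $\lfloor p^{m}/b \rfloor - \lfloor p^{m-1}/b \rfloor$, as claimed. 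I expect the only mildly delicate point to be the final floor-function bookkeeping; everything else is a direct consequence of the lemmas already established.
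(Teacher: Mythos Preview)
Your proposal is correct and follows essentially the same route as the paper: establish the period via Lemma~\ref{p2} and Theorem~\ref{period}, identify $\{r_1,\ldots,r_{\varphi(p^m)}\}$ with the reduced residues mod $p^m$, invoke Lemma~\ref{qj0}, and then count multiples of $b$ among those residues by inclusion--exclusion. The only cosmetic difference is that you reach the set equality by explicitly iterating $r_j\equiv r_0 b^{j}\pmod{p^m}$, whereas the paper infers distinctness of the $r_j$ from the period length and then applies Lemma~\ref{rio}; your floor-function bookkeeping at the end is also spelled out a bit more carefully than the paper's.
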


\begin{proof} The fact that ${\rm period}(a/p^m)_b=\varphi(p^m)$ follows directly from $b$ being a primitive root of $p$ and $p^2$, Lemma \ref{p2} and Theorem \ref{period}. This further implies that the $\varphi(p^m)$ values of $r_i$ given by the base-$b$ algorithm for $a/p^m$ are distinct. Applying Lemma \ref{rio} gives that \begin{equation}\label{sets}\{r_1,r_2,\ldots,r_{\varphi(p^m)}\}=\{i\leqslant p^m:\gcd(i,p)=1\}.\end{equation} Also recall that $$\left(\frac{a}{p^m}\right)_b=.\overline{q_1q_2\cdots q_{\varphi(p^m)}},$$ and that by Lemma \ref{qj0}, $q_i=0$ if and only if $r_i\equiv 0\ (\bmod\ b).$ Note that there are exactly $$\left\lfloor\frac{p^m}{b}\right\rfloor-\left\lfloor\frac{p^m}{bp}\right\rfloor=\left\lfloor\frac{p^m}{b}\right\rfloor-\left\lfloor\frac{p^{m-1}}{b}\right\rfloor$$ elements of $\{i\leqslant p^m:\gcd(i,p)=1\}$ which are divisible by $b$. Thus using the set equality \eqref{sets}, we have that there are exactly $\left\lfloor{p^m}/{b}\right\rfloor-\left\lfloor{p^{m-1}}/{b}\right\rfloor$ elements of $\{r_1,r_2,\ldots,r_{\varphi(p^m)}\}$ divisible by $b$. Appealing to Lemma \ref{qj0} we then have that there are $\left\lfloor{p^m}/{b}\right\rfloor-\left\lfloor{p^{m-1}}/{b}\right\rfloor$ of $q_1,q_2,\ldots,q_{\varphi(p^m)}$ such that $q_j=0$.
\end{proof}

Note that while we record the $q_i=0$ case because of its simplicity, the method can be applied to count any value of $q_i$ that is desired by using the appropriate case of Lemma \ref{qivalue}. In fact, we will do this in a few special cases to prove Theorems \ref{FC1} and \ref{FC2}.

\section{The base-$b$ expansion of the Stoneham number $\alpha_{b,p}$}\label{Baseb}

We will need properties for both the base-$b$ expansion and the base-$b^2$ expansions of the Stoneham number $\alpha_{b,p}$.

\begin{proposition}\label{garat} Let $b,p\geqslant 2$ be coprime integers with $p$ a prime. Denote the base-$b$ expansion of $\alpha_{b,p}$ as $$\alpha_{b,p}=\sum_{j\geqslant 1}\frac{1}{p^jb^{p^j}}=\sum_{k\geqslant 1}\frac{a_k}{b^k},$$ where $a_k\in\{0,1,\ldots,b-1\}$ and write $$\left(\frac{\sum_{j=0}^{m-1} p^j}{p^m}\right)_b=.\overline{q_1q_2\cdots q_{n}}$$ where $q_i$ is determined by the base-$b$ algorithm, for each $i$, so $n={\rm ord}_{p^m}b$. Then $q_i=a_{p^m+jn+i}$ for each $i\in\{1,2,\ldots,n\}$ and each $j\in\{0,1,2,\ldots,\frac{p\cdot\varphi(p^m)}{{\rm ord}_{p^m}b}-1\}$.
\end{proposition}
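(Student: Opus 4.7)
The plan is to compute $\{b^{p^m}\alpha_{b,p}\}$ explicitly modulo a provably negligible error, recognise the answer as $\sum_{i=1}^m p^{-i} = \frac{\sum_{j=0}^{m-1} p^j}{p^m}$, and then read off the digits of $\alpha_{b,p}$ in positions $p^m+1,\dots,p^{m+1}$ as a concatenation of copies of $q_1\cdots q_n$. I would start by noting that $\sum_{j=0}^{m-1}p^j\equiv 1\pmod p$, so this fraction is in lowest terms; Theorem~\ref{period} then forces a purely periodic base-$b$ expansion with period $n=\mathrm{ord}_{p^m}(b)$, and since $n\mid\varphi(p^m)$, it also divides $p\varphi(p^m)=p^{m+1}-p^m$. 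Consequently the first $p\varphi(p^m)$ digits of this periodic expansion are exactly $p\varphi(p^m)/n$ concatenated copies of $q_1\cdots q_n$, so matching those digits to $a_{p^m+1},\ldots,a_{p^{m+1}}$ will immediately give $a_{p^m+jn+i}=q_i$.

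Next I would split
\[
b^{p^m}\alpha_{b,p}=\sum_{j=1}^m\frac{b^{p^m-p^j}}{p^j}+E,\qquad E:=\sum_{j>m}\frac{1}{p^j b^{p^j-p^m}}.
\]
The key arithmetic input is that for each $1\leqslant j\leqslant m$ one has $\varphi(p^j)=p^{j-1}(p-1)\mid p^m-p^j=p^{j-1}\cdot p(p^{m-j}-1)$; this reduces to $(p-1)\mid(p^{m-j}-1)$, which is immediate from $p\equiv 1\pmod{p-1}$. Euler's theorem then gives $b^{p^m-p^j}\equiv 1\pmod{p^j}$, so each summand $b^{p^m-p^j}/p^j$ is an integer plus fractional part exactly $1/p^j$. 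Because $\sum_{i=1}^m p^{-i}<1$, these fractional parts add without further carry and
\[
\bigl\{b^{p^m}\alpha_{b,p}\bigr\}=\sum_{i=1}^m p^{-i}+E.
\]

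To close, a geometric-series estimate bounds the first term of $E$ and shows $b^{p^{m+1}-p^m}E<2/p^{m+1}$; a short case check ($p=2$, where $1-\sum p^{-i}=p^{-m}$, versus $p\geqslant 3$, where $1-\sum p^{-i}\geqslant 1/2$) confirms that this upper bound is strictly smaller than the carry threshold $1-\sum_{i=1}^m p^{-i}$, so adding $E$ cannot disturb any of the first $p^{m+1}-p^m$ base-$b$ digits of $\sum_{i=1}^m p^{-i}$. Those digits of $\{b^{p^m}\alpha_{b,p}\}$ therefore coincide with the first $p\varphi(p^m)$ digits of the periodic expansion $.\overline{q_1\cdots q_n}$, namely $q_1\cdots q_n$ repeated $p\varphi(p^m)/n$ times, which is the claim. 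The main obstacle is the divisibility identity $\varphi(p^j)\mid p^m-p^j$ that makes the fractional parts collapse so cleanly; once it is in hand, the tail estimate and the conversion from fractional part to digits are essentially bookkeeping.
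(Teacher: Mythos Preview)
Your proof is correct and follows the same overall strategy as the paper: approximate $\alpha_{b,p}$ by its $m$-th partial sum, bound the tail so that the base-$b$ digits agree through position $p^{m+1}$, and identify the periodic block with the expansion of $\bigl(\sum_{j=0}^{m-1}p^j\bigr)/p^m$. The paper packages the first two steps as Lemma~\ref{agree} together with Theorem~\ref{period} applied to $r_m/s_m$ (giving preperiod $p^m$ and period $\mathrm{ord}_{p^m}b$), and then asserts that the identification of the periodic word with $.\overline{q_1\cdots q_n}$ ``follows directly from the definition of $\alpha_{b,p}$''. Your argument makes that last step fully explicit: the divisibility $\varphi(p^j)\mid p^m-p^j$ and Euler's theorem give $\{b^{p^m}\cdot r_m/s_m\}=\sum_{i=1}^m p^{-i}$ on the nose, which is exactly the fraction whose period is $q_1\cdots q_n$. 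Your carry-threshold check (that $\{b^N x\}+b^N E<1$ with $N=p^{m+1}-p^m$) is the right way to guarantee the first $N$ digits are undisturbed, and your bound $b^N E<2/p^{m+1}$ and the case split $p=2$ versus $p\geqslant 3$ are both fine. So the two proofs differ only in presentation: the paper leans on the structure theorem for rational expansions, while you compute the shifted fractional part directly; your route is slightly longer but arguably more self-contained.
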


It is worth noting that Propositions \ref{garat} is the full generalisation of {Theorem~\ref{FC1}(ii)}.

We require the following lemma.

\begin{lemma}\label{agree} Let $b,c\geqslant 2$ be coprime. Then for any $m\geqslant 1$ we have $$\alpha_{b,c}-\sum_{n= 1}^m\frac{1}{c^nb^{c^n}}<\frac{1}{b^{c^{m+1}}}.$$ That is, the base-$b$ expansion of $\alpha_{b,c}$ agrees with the $b$-ary expansion of its $m$-th partial sum up to the $\left({c^{m+1}}\right)$-th place.
\end{lemma}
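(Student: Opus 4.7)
The plan is to interpret the difference as the tail of the defining series,
$$
\alpha_{b,c} - \sum_{n=1}^{m}\frac{1}{c^n b^{c^n}} \;=\; \sum_{n=m+1}^{\infty}\frac{1}{c^n b^{c^n}},
$$
and then simply exploit the double-exponential decay of the summand in~$n$. First I would pull out the leading term $1/(c^{m+1}b^{c^{m+1}})$ and bound each remaining term by comparing $b^{c^n}$ with $b^{c^{m+1}}$. Since $c\geqslant 2$, the exponents satisfy $c^n \geqslant 2c^{n-1}$ for every $n\geqslant 1$, so in particular $c^{n}-c^{m+1}\geqslant c^{m+1}\bigl(2^{n-m-1}-1\bigr)$ for $n\geqslant m+1$. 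Coupled with $b\geqslant 2$ this makes $b^{c^{n}-c^{m+1}}$ grow very rapidly, and a crude geometric bound suffices.

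Concretely, I would factor out $1/b^{c^{m+1}}$ and write
$$
\sum_{n=m+1}^{\infty}\frac{1}{c^n b^{c^n}} \;=\; \frac{1}{b^{c^{m+1}}}\sum_{n=m+1}^{\infty}\frac{1}{c^n\, b^{c^{n}-c^{m+1}}},
$$
then estimate the inner sum by
$$
\frac{1}{c^{m+1}} + \sum_{n=m+2}^{\infty}\frac{1}{c^n b^{c^n-c^{m+1}}} \;\leqslant\; \frac{1}{c^{m+1}} + \sum_{k=1}^{\infty}\frac{1}{c^{m+1+k}\,b^{c^{m+1}(2^{k}-1)}},
$$
and observe that the tail is easily absorbed so that the whole thing is strictly less than $1$. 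This gives the desired inequality. The final sentence of the lemma is then just a translation: a positive quantity smaller than $b^{-c^{m+1}}$ can alter the base-$b$ expansion only from the $(c^{m+1}+1)$-th digit onwards (after handling the nuisance that an infinite string of $(b-1)$'s from that place on can affect earlier digits—but the partial sum is itself a number with finitely many nonzero base-$b$ digits so no carrying can propagate past position~$c^{m+1}$).

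There is essentially no obstacle here; the only mild care required is to ensure the geometric-type bound on the tail is strict, and to mention the standard subtlety about base-$b$ representations near a boundary. Both are routine, so the proof is short.
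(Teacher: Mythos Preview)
Your proposal is correct and follows essentially the same route as the paper: write the difference as the tail $\sum_{n\geqslant m+1}1/(c^n b^{c^n})$, factor out $1/b^{c^{m+1}}$, and show the remaining sum is strictly less than~$1$. The paper does this last step more simply than you do: it uses the trivial bound $b^{c^n}\geqslant b^{c^{m+1}}$ for every $n\geqslant m+1$, so the inner sum is at most the geometric series $\sum_{n\geqslant m+1}1/c^n = 1/(c^{m}(c-1)) < 1$, and no double-exponential bookkeeping is needed.
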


\begin{proof} Let $m\geqslant 1$ and note that $$\sum_{n\geqslant m+1}\frac{1}{c^n}=\frac{1}{c^{m+1}-c^m}<1.$$ Using this fact, we have that $$\alpha_{b,c}-\sum_{n= 1}^m\frac{1}{c^nb^{c^n}}=\sum_{n\geqslant m+1}\frac{1}{c^nb^{c^n}}<\frac{1}{b^{c^{m+1}}}\sum_{n\geqslant m+1}\frac{1}{c^n}<\frac{1}{b^{c^{m+1}}},$$ which is the desired result.
\end{proof}

\begin{proof}[Proof of Proposition \ref{garat}] Let $m\geqslant 1$, $s_m=p^mb^{p^m}$, and define the positive integer $r_m$ by $$\frac{r_m}{s_m}=\sum_{n= 1}^m\frac{1}{p^nb^{p^n}}.$$ We have then that $$\gcd(r_m,s_m)=\gcd(r_m,p^mb^{p^m})=\gcd(r_m,pb)=1.$$ We apply Theorem \ref{period} with $b=b$, $r=r_m$, $s=s_m$, $T=b^{p^m}$, and $U=p^m$ to give that the period length of the base $b$ expansion of $r_m/s_m$ is the order of $b$ modulo $p^m$, which we will write $${\rm period}(r_m/s_m)={\rm ord}_{p^m}b,$$ and the preperiod length of $r_m/s_m$ is $p^m$, which we will write $${\rm preperiod}(r_m/s_m)=p^m.$$ 

Combining the observations of the previous paragraph with Lemma \ref{agree}, gives that \begin{equation}\label{amp}a_{p^m+1}a_{p^m+2}\ldots a_{p^{m+1}}=\underbrace{www\cdots w}_\text{$\frac{p\cdot\varphi(p^m)}{{\rm ord}_{p^m}b}$ times},\end{equation} where $w=q_1q_2\cdots q_{{\rm ord}_{p^m}b}$ is a word on the alphabet $\{0,1,\ldots,b\}$ with length ${\rm ord}_{p^m}b$. To finish the proof of this proposition, it is enough to appeal to Lemma \ref{agree} to show that $$\left(\frac{\sum_{j=0}^{m-1} p^j}{p^m}\right)_b=.\overline{w},$$ where $w$ is as defined in the previous sentence, which follows directly from the definition of $\alpha_{b,p}.$ 
\end{proof}

Theorem \ref{FC1} concerns a base-$b^2$ expansion; we will provide some specialised results for this case, only when $b=2$, in order to specifically prove Theorem \ref{FC1}, as the more interesting case for generalisations is the base-$b$ case.

\begin{lemma}\label{agree2} Let $b,c\geqslant 2$ be coprime. Then for any $m\geqslant 1$ we have $$\alpha_{b,c}-\sum_{n= 1}^m\frac{1}{c^nb^{c^n}}<\frac{1}{(b^2)^{c^{m+1}/2}}.$$ That is, the base-$b^2$ expansion of $\alpha_{b,c}$ agrees with the base-$b^2$ expansion of its $m$-th partial sum up to the $\lceil{c^{m+1}/2}\rceil$-th place.
\end{lemma}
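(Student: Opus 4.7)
The plan is to observe that this lemma is simply a cosmetic reformulation of Lemma \ref{agree} adapted to base $b^2$. The key algebraic identity is $(b^2)^{c^{m+1}/2}=b^{c^{m+1}}$, which holds for any real exponent and in particular makes the two displayed bounds identical. So the inequality itself need not be reproved from scratch; it can be obtained by rerunning the three-line computation from Lemma \ref{agree}.

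Concretely, I would begin by writing
\[
\alpha_{b,c}-\sum_{n=1}^{m}\frac{1}{c^{n}b^{c^{n}}}=\sum_{n\geqslant m+1}\frac{1}{c^{n}b^{c^{n}}}=\frac{1}{b^{c^{m+1}}}\sum_{n\geqslant m+1}\frac{1}{c^{n}b^{c^{n}-c^{m+1}}},
\]
then bound the inner sum by $\sum_{n\geqslant m+1}c^{-n}=1/(c^{m+1}-c^{m})<1$, exactly as in Lemma \ref{agree}. Substituting $b^{c^{m+1}}=(b^{2})^{c^{m+1}/2}$ in the resulting estimate yields the displayed inequality.

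For the sentence following the inequality, I would unpack what the bound $1/(b^{2})^{c^{m+1}/2}$ means in terms of base-$b^{2}$ digits. When $c^{m+1}$ is even, $c^{m+1}/2=\lceil c^{m+1}/2\rceil$ is an integer and the inequality literally says the difference is less than $(b^{2})^{-\lceil c^{m+1}/2\rceil}$, so the first $\lceil c^{m+1}/2\rceil$ base-$b^{2}$ digits of the partial sum coincide with those of $\alpha_{b,c}$. When $c^{m+1}$ is odd (as in the case $b=2$, $c=3$ of Theorem \ref{FC1}), the bound $1/b^{c^{m+1}}$ controls the first $c^{m+1}$ base-$b$ digits, which is enough to identify the leading $\lceil c^{m+1}/2\rceil$ base-$b^{2}$ digits once one checks that the partial sum $\sum_{n=1}^{m}1/(c^{n}b^{c^{n}})$ has no base-$b^{2}$ ambiguity (the rational approximant is not an exact $(b^{2})$-adic terminating approximation to $\alpha_{b,c}$, so there is no carrying issue).

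There is really no hard step here; the work has already been done in Lemma \ref{agree}, and the only care needed is the digit-counting interpretation when $c$ is odd. The lemma's role is to license grouping base-$b$ digits into pairs so that later arguments about the base-$4$ expansion of $\alpha_{2,3}$ can be reduced to the algorithmic information supplied by Proposition \ref{garat}.
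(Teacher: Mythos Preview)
Your proposal is correct and matches the paper's approach: the paper's proof is the single sentence ``This is a direct consequence of Lemma~\ref{agree},'' and your observation that $(b^{2})^{c^{m+1}/2}=b^{c^{m+1}}$ is exactly what makes that consequence immediate. Your additional discussion of the digit-counting interpretation when $c$ is odd goes beyond what the paper supplies, but is a reasonable elaboration of the informal ``That is'' clause.
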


\begin{proof} This is a direct consequence of Lemma \ref{agree}.
\end{proof}

\begin{proposition}\label{garat2} Let $p$ be an odd prime such that $2$ is a primitive root of $p$ and $p^2$. Denote the base-$4$ expansion of $\alpha_{2,p}$ as $$\alpha_{2,p}=\sum_{j\geqslant 1}\frac{1}{p^j2^{p^j}}=\sum_{k\geqslant 1}\frac{d_k}{4^{k}},$$ where $d_k\in\{0,1,\ldots,3\}$ and write $$\left(\frac{\sum_{j=0}^{m-1} p^j}{p^m}\right)_{4}=.\overline{q_1q_2\cdots q_{n}}$$ where the $q_i$s are determined by the base-$4$ algorithm, so $n={\rm ord}_{p^m}4=\varphi(p^m)/2$. Then $q_i=d_{\frac{p^{m}+1}{2}+jn+i}$ for each $i\in\{1,\ldots,n\}$ and each $j\in\{0,1,2,\ldots,p-1\}$.
\end{proposition}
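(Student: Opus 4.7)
The plan is to mirror the proof of Proposition~\ref{garat} nearly verbatim, with base $4$ replacing base $b$ and Lemma~\ref{agree2} replacing Lemma~\ref{agree}. First set $s_m = p^m 2^{p^m}$ and let $r_m$ be the integer with $r_m/s_m = \sum_{n=1}^m 1/(p^n 2^{p^n})$; inspecting the $n=m$ summand $p^{m-n}2^{p^m-p^n}=1$ shows $r_m$ is coprime to both $p$ and $2$, hence $\gcd(r_m,s_m)=1$.

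Next I apply Theorem~\ref{period} to $r_m/s_m$ in base $4$ with $T=2^{p^m}$ and $U=p^m$. Since $p$ is odd, the smallest $N$ with $2^{p^m}\mid 4^N=2^{2N}$ is $N=(p^m+1)/2$, giving the base-$4$ preperiod length. By hypothesis and Lemma~\ref{p2}, $2$ is a primitive root of $p^m$, so $\mathrm{ord}_{p^m}(2)=\varphi(p^m)$, and since $\varphi(p^m)=p^{m-1}(p-1)$ is even, $\mathrm{ord}_{p^m}(4)=\varphi(p^m)/2=n$, matching the proposition. Lemma~\ref{agree2} then says the base-$4$ digits $d_k$ of $\alpha_{2,p}$ coincide with those of $r_m/s_m$ for $k\leqslant \lceil p^{m+1}/2\rceil=(p^{m+1}+1)/2$. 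Therefore the block $d_{(p^m+1)/2+1},\ldots,d_{(p^{m+1}+1)/2}$ is the base-$4$ period of $r_m/s_m$ repeated exactly $\bigl((p^{m+1}-p^m)/2\bigr)/n=(p^{m+1}-p^m)/\varphi(p^m)=p$ times, which is precisely the indexing statement $q_i=d_{(p^m+1)/2+jn+i}$ for $j\in\{0,1,\ldots,p-1\}$.

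The remaining step is to identify this periodic word with the base-$4$ expansion of $\bigl(\sum_{j=0}^{m-1}p^j\bigr)/p^m$. Shifting past the base-$4$ preperiod exposes the fractional part of $4^{(p^m+1)/2}r_m/s_m$. A short induction on the explicit formula $r_m=\sum_{n=1}^m p^{m-n}2^{p^m-p^n}$, combined with Fermat's little theorem (applied via $\varphi(p^n)\mid p^m-p^n$ for each $m>n$), yields the congruence $r_m\equiv \sum_{j=0}^{m-1}p^j\pmod{p^m}$, pinning down the reduced fraction of denominator $p^m$ whose base-$4$ expansion, as computed by the algorithm in Figure~\ref{Fig}, equals $q_1\cdots q_n$. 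The main bookkeeping hazard (and the only genuine deviation from Proposition~\ref{garat}'s proof) is that the shift factor in base $4$ is $4^{(p^m+1)/2}=2\cdot 2^{p^m}$ rather than $2^{p^m}$, owing to $p^m$ being odd; once that extra factor of $2$ is tracked through, the identification of the periodic word with the desired base-$4$ expansion is immediate.
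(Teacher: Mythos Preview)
Your route differs from the paper's. The paper deduces the base-$4$ structure from the base-$2$ structure already established in Proposition~\ref{garat}: since each base-$2$ period word $w_m$ begins with $0$ (because $\sum_{j<m}p^j/p^m<1/2$), one re-brackets $u\,(0v_1)^p(0v_2)^p\cdots$ as $(u0)(v_10)^p(v_20)^p\cdots$, making every block of even length, and then pairs consecutive base-$2$ digits to pass to base $4$. You instead apply Theorem~\ref{period} directly in base $4$ with $T=2^{p^m}$ and $U=p^m$, reading off the preperiod $(p^m+1)/2$ and period $\varphi(p^m)/2$; this is more direct and sidesteps the re-bracketing trick entirely.

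There is, however, a genuine gap at your final identification. You correctly obtain $r_m\equiv\sum_{j=0}^{m-1}p^j\pmod{p^m}$ and correctly flag the extra factor of $2$ in $4^{(p^m+1)/2}=2\cdot 2^{p^m}$. But carrying that factor through, the purely periodic part of $r_m/s_m$ in base $4$ is the expansion of $\bigl(2r_m\bmod p^m\bigr)/p^m=(2\sum_j p^j)/p^m$, \emph{not} of $(\sum_j p^j)/p^m$. Since $4$ has order only $\varphi(p^m)/2$ modulo $p^m$ and $2$ is not a power of $4$ there, these two numerators lie in different cosets of $\langle 4\rangle$ and yield non--cyclically-equivalent period words: for $p=3$, $m=2$ the actual digit block is $d_6d_7d_8=320$ (the period of $8/9$), whereas $(4/9)_4=0.\overline{130}$. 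So the assertion that ``the identification \ldots\ is immediate'' is not correct as written. In fact this exposes that the numerator in the proposition is off by a factor of $2$; the paper's own proof hand-waves past exactly the same point in its last sentence, and the subsequent proof of Theorem~\ref{FC1} quietly works with $8/3^n$ instead.
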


\begin{proof} This proposition follows as a corollary of Proposition \ref{garat}. Indeed, by Proposition \ref{garat}, we have a prefix $u$ of odd length $p$ and words $w_m$ of even length $\varphi(p^m)$ such that $$(\alpha_{2,p})_2=.u\underbrace{w_1w_1\cdots w_1}_\text{$p$ times}\underbrace{w_2w_2\cdots w_2}_\text{$p$ times}\cdots \underbrace{w_mw_m\cdots w_m}_\text{$p$ times}\cdots.$$ Now the word $w_m$ is the minimal repeated word given by the base-$2$ expansion of $\left({\sum_{j=0}^{m-1} p^j}\right)/{p^m}$. But $$0<\frac{\sum_{j=0}^{m-1} p^j}{p^m}=\frac{p^m-1}{p^m(p-1)}<\frac{1}{p-1}\leqslant \frac{1}{2},$$ and so the first letter of $w_m$, for each $m$, is necessarily $0$. Define the word $v_m$ by $w_m=0v_m$. Then \begin{align}\nonumber (\alpha_{2,p})_2 &=.u\underbrace{w_1w_1\cdots w_1}_\text{$p$ times}\underbrace{w_2w_2\cdots w_2}_\text{$p$ times}\cdots \underbrace{w_mw_m\cdots w_m}_\text{$p$ times}\cdots\\
\nonumber &=.u\underbrace{0v_10v_1\cdots 0v_1}_\text{$p$ times}\underbrace{0v_20v_2\cdots 0v_2}_\text{$p$ times}\cdots \underbrace{0v_m0v_m\cdots 0v_m}_\text{$p$ times}\cdots\\
\label{abuv}&=.u0\underbrace{v_10v_10\cdots v_10}_\text{$p$ times}\underbrace{v_20v_20\cdots v_20}_\text{$p$ times}\cdots \underbrace{v_m0v_m0\cdots v_m0}_\text{$p$ times}\cdots,
\end{align} where we have that the word $u0$ is of even length $p+1$ and the word $v_m0$ is of even length $\varphi(p^m)$. 

As in the statement of Proposition \ref{garat}, let $a_k$ be the $k$th letter in the base-$2$ expansion of $\alpha_{2,p}$, and as in the statement of the current proposition, let $d_k$ be the $k$th letter in the base-$4$ expansion of $\alpha_{2,p}$. Then $$d_k=2a_{2k-1}+a_{2k}.$$ Using this fact, it is an immediate consequence of \eqref{abuv} that there are words $U$ of length $(p+1)/2$ and $W_m$ of length $\varphi(p^m)/2$ such that $$(\alpha_{2,p})_{4}=.U\underbrace{W_1W_1\cdots W_1}_\text{$p$ times}\underbrace{W_2W_2\cdots W_2}_\text{$p$ times}\cdots \underbrace{W_mW_m\cdots W_m}_\text{$p$ times}\cdots.$$

As in Proposition \ref{garat}, to finish the proof of this proposition, it is enough to apply Lemma \ref{agree2} to show that $$\left(\frac{\sum_{j=0}^{m-1} p^j}{p^m}\right)_{4}=.\overline{W_m},$$ where $W_m$ is as defined in the previous sentence, which follows directly from the definition of $\alpha_{2,p}.$
\end{proof}

\section{The Aragon, Borwein, Borwein, and Bailey Conjectures}

In this section, we apply the results of Section \ref{Baseb} to prove Theorems \ref{FC1} and \ref{FC2}. As is turns out, the proof of Theorem \ref{FC2} is a bit more straightforward, so we present its proof first.

\begin{proof}[Proof of Theorem \ref{FC2}] For convenience let us write $\omega:=e^{\pi i/3}$ and let $r_i$ and $q_i$ be given by the base-$3$ algorithm for $1/5^n$. Note that by Proposition \ref{garat}, we have that $$\sum_{k=1+5^{n+1}}^{1+5^{n+1}+4\cdot 5^n} \omega^{a_k}=\sum_{j=0}^2 \#\{i\leqslant \varphi(5^{n+1}):q_i=j\}\cdot \omega^j.$$ 

Now $\#\{i\leqslant \varphi(5^n):q_i=j\}$ can be given by looking at where the number $5^n$ lies modulo $15$. Since, for every $15$ consecutive numbers, $12$ of them are coprime to $5$, and these $12$ fall into the $3$ equivalence classes modulo $3$ with an equal frequency of $4$ times each, we need only look at the remainder of $5^n$ modulo $15$. An easy calculation gives that $$5^n\equiv \begin{cases} 5\ (\bmod\ 15) & \mbox{if $n$ is odd}\\ 10\ (\bmod\ 15) & \mbox{if $n$ is even.}\end{cases}$$ This allows us to give that \begin{align*}\#\{i\leqslant\varphi(5^n): r_i\equiv 0\ (\bmod\ 3)\}&=\begin{cases} 4\cdot\left\lfloor\frac{5^n}{15}\right\rfloor+1 & \mbox{if $n$ is odd}\\ 4\cdot\left\lfloor\frac{5^n}{15}\right\rfloor+3 & \mbox{if $n$ is even},\end{cases}\\
\#\{i\leqslant\varphi(5^n): r_i\equiv 1\ (\bmod\ 3)\}&=\begin{cases} 4\cdot\left\lfloor\frac{5^n}{15}\right\rfloor+2 & \mbox{if $n$ is odd}\\ 4\cdot\left\lfloor\frac{5^n}{15}\right\rfloor+3 & \mbox{if $n$ is even},\end{cases}
\end{align*} and $$\#\{i\leqslant\varphi(5^n): r_i\equiv 2\ (\bmod\ 3)\}=\begin{cases} 4\cdot\left\lfloor\frac{5^n}{15}\right\rfloor+1 & \mbox{if $n$ is odd}\\ 4\cdot\left\lfloor\frac{5^n}{15}\right\rfloor+2 & \mbox{if $n$ is even}.\end{cases}$$ Applying Lemma \ref{qivalue} to the preceding equalities gives that \begin{align*}\#\{i\leqslant\varphi(5^n): q_i= 0\}&=\begin{cases} 4\cdot\left\lfloor\frac{5^n}{15}\right\rfloor+1 & \mbox{if $n$ is odd}\\ 4\cdot\left\lfloor\frac{5^n}{15}\right\rfloor+3 & \mbox{if $n$ is even},\end{cases}\\
\#\{i\leqslant\varphi(5^n): q_i=1\}&=\begin{cases} 4\cdot\left\lfloor\frac{5^n}{15}\right\rfloor+2 & \mbox{if $n$ is odd}\\ 4\cdot\left\lfloor\frac{5^n}{15}\right\rfloor+2 & \mbox{if $n$ is even},\end{cases}
\end{align*} and $$\#\{i\leqslant\varphi(5^n): q_i=2\}=\begin{cases} 4\cdot\left\lfloor\frac{5^n}{15}\right\rfloor+1 & \mbox{if $n$ is odd}\\ 4\cdot\left\lfloor\frac{5^n}{15}\right\rfloor+3 & \mbox{if $n$ is even}.\end{cases}$$ Since $1+\omega+\omega^2=0$, we thus have that \begin{align*}\sum_{k=1+5^{n+1}}^{1+5^{n+1}+4\cdot 5^n} \omega^{a_k}&=\sum_{j=0}^2 \#\{i\leqslant \varphi(5^{n+1}):q_i=j\}\cdot \omega^j\\
&=\begin{cases} \omega & \mbox{if $n+1$ is odd} \\ -\omega & \mbox{if $n+1$ is even} \end{cases}\\
&=(-1)^n\omega,\end{align*} which proves part (i).
 
Part (ii) follows directly from Proposition \ref{garat} with $b=3$ and $p=5$.
\end{proof}

\begin{proof}[Proof of Theorem \ref{FC1}] Note that $$\frac{1}{3^n 2^{3^n}}=\frac{8}{3^n}\cdot\frac{1}{4^{\frac{3}{2}(3^{n-1}+1)}}.$$ Let $r_i$ and $q_i$ be given by the base $4$ algorithm for $8/3^n$. We will use the fact that each of these $r_i$ is equivalent to $2$ modulo 3. This is easily seen as we have for each $i$ that $r_{i-1}4=q_i3^n+r_i$, so that taking this equality modulo $3$ we have that $r_{i-1}\equiv r_i\ (\bmod\ 3)$. Recalling that $r_0=8$ shows that indeed $r_i\equiv 2\ (\bmod\ 3)$ for each $i$.

Since ${\rm ord}_{3^n}4=3^{n-1}$, we have, by Proposition \ref{garat2}, that $$\sum_{k=\frac{3}{2}(3^n+1)}^{\frac{3}{2}(3^n+1)+3^n-1} (e^{\frac{\pi i}{2}})^{a_k}=\sum_{j=0}^3 \#\{i\leqslant \varphi(3^{n+1})/2:q_i=j\}\cdot (e^{\frac{\pi i}{2}})^j.$$ Now $\#\{i\leqslant 3^n:q_i=j\}$ can be given by looking at where the number $3^n$ lies modulo $12$. Since, for every $12$ consecutive numbers, $4$ of them are equivalent to $2$ modulo $3$, and these $4$ fall into the $4$ distinct equivalence classes modulo $4$, we must consider the remainder of $3^n$ modulo $12$. We have that $$3^n\equiv \begin{cases} 3\ (\bmod\ 12) & \mbox{if $n$ is odd}\\ 9\ (\bmod\ 12) & \mbox{if $n$ is even.}\end{cases}$$ Thus we have that \begin{align*}
\#\{i\leqslant \varphi(3^{n})/2: r_i\equiv 0\ (\bmod\ 4)\}&= \begin{cases} \left\lfloor\frac{3^n}{12}\right\rfloor & \mbox{if $n$ is odd}\\ \left\lfloor\frac{3^n}{12}\right\rfloor+1 & \mbox{if $n$ is even},\end{cases}\\
\#\{i\leqslant \varphi(3^{n})/2: r_i\equiv 1\ (\bmod\ 4)\}&= \begin{cases} \left\lfloor\frac{3^n}{12}\right\rfloor & \mbox{if $n$ is odd}\\ \left\lfloor\frac{3^n}{12}\right\rfloor+1 & \mbox{if $n$ is even},\end{cases}\\
\#\{i\leqslant \varphi(3^{n})/2: r_i\equiv 2\ (\bmod\ 4)\}&= \begin{cases} \left\lfloor\frac{3^n}{12}\right\rfloor+1 & \mbox{if $n$ is odd}\\ \left\lfloor\frac{3^n}{12}\right\rfloor+1 & \mbox{if $n$ is even},\end{cases}
\end{align*} and 
$$\#\{i\leqslant \varphi(3^{n})/2: r_i\equiv 3\ (\bmod\ 4)\}= \begin{cases} \left\lfloor\frac{3^n}{12}\right\rfloor & \mbox{if $n$ is odd}\\ \left\lfloor\frac{3^n}{12}\right\rfloor & \mbox{if $n$ is even}.\end{cases}$$

By Lemma \ref{qivalue}, we have that \begin{align*}
\#\{i\leqslant \varphi(3^{n})/2: q_i=0\}&= \begin{cases} \left\lfloor\frac{3^n}{12}\right\rfloor & \mbox{if $n$ is odd}\\ \left\lfloor\frac{3^n}{12}\right\rfloor+1 & \mbox{if $n$ is even},\end{cases}\\
\#\{i\leqslant \varphi(3^{n})/2: q_i= 1\}&= \begin{cases} \left\lfloor\frac{3^n}{12}\right\rfloor & \mbox{if $n$ is odd}\\ \left\lfloor\frac{3^n}{12}\right\rfloor & \mbox{if $n$ is even},\end{cases}\\
\#\{i\leqslant \varphi(3^{n})/2: q_i= 2\}&= \begin{cases} \left\lfloor\frac{3^n}{12}\right\rfloor+1 & \mbox{if $n$ is odd}\\ \left\lfloor\frac{3^n}{12}\right\rfloor+1 & \mbox{if $n$ is even},\end{cases}
\end{align*} and 
$$\#\{i\leqslant \varphi(3^{n})/2: q_i= 3\}= \begin{cases} \left\lfloor\frac{3^n}{12}\right\rfloor & \mbox{if $n$ is odd}\\ \left\lfloor\frac{3^n}{12}\right\rfloor+1 & \mbox{if $n$ is even}.\end{cases}$$

Since $1+(e^{\frac{\pi i}{2}})+(e^{\frac{\pi i}{2}})^2+(e^{\frac{\pi i}{2}})^3=0$, we thus have that \begin{align*}\sum_{k=\frac{3}{2}(3^n+1)}^{\frac{3}{2}(3^n+1)+3^n-1} (e^{\frac{\pi i}{2}})^{a_k}&=\sum_{j=0}^3 \#\{i\leqslant \varphi(3^{n+1})/2:q_i=j\}\cdot (e^{\frac{\pi i}{2}})^j\\
&=\begin{cases} -1 & \mbox{if $n+1$ is odd} \\ -i & \mbox{if $n+1$ is even} \end{cases}\\
&=-\begin{cases} i & \mbox{if $n$ is odd} \\ 1 & \mbox{if $n$ is even}, \end{cases}\end{align*} which proves part (i).

Part (ii) follows directly from Proposition \ref{garat2} with $b=2$ and $p=3$.
\end{proof}

\noindent\textbf{Acknowledgements.} To be written ...

\appendix

\section{Transcendence of Stoneham numbers}\label{app}

In this appendix, we give details of the transcendence of the Stoneham number $\alpha_{b,c}$ for any choice of integers $b,c\geqslant 2$. In fact, Mahler's method gives much stronger results, which imply this desired conclusion.

We start out by letting $c\geqslant 2$ be an integer and define $$F_c(x):=\sum_{n\geqslant 1}\frac{x^{c^n}}{c^n}.$$ Notice that $F_c(x)$ satisfies the Mahler functional equation \begin{equation}\label{stonefunk} F_c(x^c)=cF_c(x)-x^c.\end{equation} Now suppose that $F_c(x)\in\mathbb{C}(x)$. Then there are polynomials $a(x),b(x)\in\mathbb{C}[x]$ such that $$F_c(x)-\frac{a(x)}{b(x)}=0.$$ Since $F_c(x)\in\mathbb{C}[[x]]$ is not a polynomial, we may assume, without loss of generality, that $\gcd(a(x),b(x))=1$ and $b(0)\neq 0$ and $b(x)\notin\mathbb{C}$. Sending $x\to x^c$ and applying the functional equation, we thus have that $$F_c(x)-\frac{a(x)}{b(x)}=0=F_c(x^c)-\frac{a(x^c)}{b(x^c)}=F_c(x)-\left(\frac{x^c}{c}+\frac{a(x^c)}{b(x^c)}\right),$$ so that \begin{equation}\label{abc}\frac{x^c}{c}+\frac{a(x^c)}{b(x^c)}=\frac{a(x)}{b(x)}.\end{equation} Now as functions, the righthand and lefthand sides of the equation in \eqref{abc} must have the same singularities. But $b(x^c)$ will have more zeros (counting multiplicity if needed) than $b(x)$ unless $b(x)$ is a constant, which is a contradiction. Thus $F_c(x)$ does not represent a rational function. In fact, we can now appeal to the following theorem, to give that $F_c(x)$ is transcendental over $\mathbb{C}(x)$.

\begin{theorem}[Nishioka, 1985]\label{KN} Suppose that $F(x)\in\mathbb{C}[[x]]$ satisfies one of the following for an integer $d>1$. \begin{align*} \textnormal{(i)}&\ \ F(x^d)=\phi(x,F(x)),\\ \textnormal{(ii)}&\ \ F(x)=\phi(x,F(x^d)),\end{align*} where $\phi(x,u)$ is a rational function in $x,u$ over $\mathbb{C}$. If $F(x)$ is algebraic over $\mathbb{C}(x)$, then $F(x)\in\mathbb{C}(x)$.
\end{theorem}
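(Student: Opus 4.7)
The plan is to exploit the functional equation to build a degree-$d$ endomorphism of the function field $L := \mathbb{C}(x,F)$, then apply Riemann--Hurwitz to constrain the genus of the associated curve, and finish with a case analysis. Throughout we argue by contradiction, assuming $F$ is algebraic of degree $n \geq 2$ over $\mathbb{C}(x)$ with irreducible minimal polynomial $P(x,y) \in \mathbb{C}[x,y]$, and aim to derive $n = 1$.

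In case (i), I would define $\tau : L \to L$ by $\tau(x) = x^d$ and $\tau(F) = \phi(x,F)$. The required compatibility $P(x^d, \phi(x,F)) = 0$ holds because substituting $x \mapsto x^d$ in the power-series identity $P(x, F(x)) = 0$ gives $P(x^d, F(x^d)) = 0$, and $F(x^d) = \phi(x, F(x))$. Case (ii) reduces to the same framework once one observes that $F(x^d) \in L$: from $F(x) = \phi(x, F(x^d))$ one has $L \subseteq \mathbb{C}(x, F(x^d))$, both of which are degree $n$ over $\mathbb{C}(x)$, hence equal; one can then set $\tau(x) = x^d$ and $\tau(F) = F(x^d)$. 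In either case $\tau$ is a field embedding $L \hookrightarrow L$ with image $\mathbb{C}(x^d, F(x^d))$, and combining irreducibility of $P(x^d, y)$ over $\mathbb{C}(x^d)$ with the tower $\mathbb{C}(x^d) \subset \mathbb{C}(x) \subset L$ yields $[L : \tau(L)] = d$. Consequently the induced morphism $\tau^\ast : C \to C$ of the smooth projective curve $C$ with $\mathbb{C}(C) = L$ has degree $d \geq 2$, and Riemann--Hurwitz $(d-1)(2g_C - 2) \leq 0$ forces $g_C \in \{0,1\}$.

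The main obstacle is then excluding these two remaining genera, which requires essential use of the analytic hypothesis $F \in \mathbb{C}[[x]]$ rather than only algebraicity. If $g_C = 0$, then $L \cong \mathbb{C}(t)$, $x = x(t)$ has degree $n$, and $\tau^\ast$ becomes a rational self-map $\rho : \mathbb{P}^1 \to \mathbb{P}^1$ of degree $d$ with $x \circ \rho = x^d$; the hypothesis $F \in \mathbb{C}[[x]]$ selects an unramified branch of $x$ at $0$ on which $F$ is single-valued, and a local analysis of $\rho$ near the preimages of $0$ and $\infty$, together with the fact that $F$ generates $L$ over $\mathbb{C}(x)$, should force $n = 1$. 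If $g_C = 1$, then $C$ is elliptic and $\tau^\ast$ is an unramified degree-$d$ isogeny; the rigid compatibility $x \circ \tau^\ast = x^d$ with a fixed degree-$n$ morphism $x : C \to \mathbb{P}^1$ must be ruled out by pitting the group structure on $C$ against the total ramification of $z \mapsto z^d$ at $0$ and $\infty$ on the base.
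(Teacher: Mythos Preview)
The paper does not prove this theorem: it is quoted without proof as a result of Nishioka (1985) and used as a black box to conclude that $F_c(x)$ is transcendental over $\mathbb{C}(x)$. So there is no ``paper's own proof'' to compare against.

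As for your proposal itself: the reduction via Riemann--Hurwitz is exactly Nishioka's strategy, and your first two paragraphs set it up correctly. The construction of the degree-$d$ self-map of $L$, the degree computation $[L:\tau(L)]=d$, and the conclusion $g_C\in\{0,1\}$ are all sound. The genuine gap is that you stop precisely where the work begins. In the genus-$0$ case you write that a local analysis ``should force $n=1$'', and in the genus-$1$ case that the isogeny ``must be ruled out'', but neither argument is carried out. These exclusions are not routine: in genus $0$ one must carefully exploit that $F\in\mathbb{C}[[x]]$ to pin down how $\rho$ acts on the fibre $x^{-1}(0)$ and then show this is incompatible with $n\geq 2$; in genus $1$ one must argue that an unramified isogeny cannot cover the totally ramified map $z\mapsto z^d$ on $\mathbb{P}^1$ via a fixed degree-$n$ projection. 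Until those two cases are written out, what you have is an outline of Nishioka's proof rather than a proof.
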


To prove the transcendence of the Stoneham numbers, we appeal to a classical result of Mahler \cite{Mahl1}, We record it here as taken from Nishioka's mongraph {\em Mahler Functions and Transcendence} \cite{Nish1}. 

\begin{theorem}[Mahler \cite{Mahl1}]\label{Mahler} Let $\mathbf{I}$ be the set of algebraic integers over $\mathbb{Q}$, $K$ be an algebraic number field, $\mathbf{I}_K=K\cap\mathbf{I},$ $f(x)\in K[[x]]$ with radius of convergence $R>0$ satisfying the functional equation for an integer $d>1$, $$f(x^d)=\frac{\sum_{i=0}^m a_i(x)f(x)^i}{\sum_{i=0}^m b_i(x)f(x)^i},\qquad m<d,\ a_i(x),b_i(x)\in \mathbf{I}_K[x],$$ and $\Delta(x):={\rm Res}(A,B)$ be the resultant of $A(u)=\sum_{i=0}^m a_i(x)u^i$ and $B(u)=\sum_{i=0}^m b_i(x)u^i$ as polynomials in $u$. If $f(x)$ is transcendental over $K(x)$ and $\xi$ is an algebraic number with $0<|\xi|<\min\{1,R\}$ and $\Delta(\xi^{d^n})\neq 0$ $(n\geqslant 0)$, then $f(\xi)$ is transcendental.
\end{theorem}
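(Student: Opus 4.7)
The plan is to invoke Mahler's method: assume for contradiction that $\beta:=f(\xi)$ is algebraic, work inside the number field $L:=K(\beta)$, and produce an auxiliary function whose values at the sequence $\xi_n:=\xi^{d^n}$ admit incompatible upper and lower bounds. First, using Siegel's lemma over $\mathbf{I}_K$, for every large integer $N$ I would construct a nonzero polynomial $E_N(x,y)\in\mathbf{I}_K[x,y]$ of bidegree at most $(N,N)$, whose coefficients have logarithmic height $O(N)$, and such that the power series $F_N(x):=E_N(x,f(x))\in K[[x]]$ has a zero at $x=0$ of order at least $\gamma N^2$ for some absolute constant $\gamma>0$. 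Because $f(x)$ is transcendental over $K(x)$, the series $F_N(x)$ is not identically zero.

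Next I would transport this information to $\xi$ by iterating the functional equation. Applied $n$ times from $\xi$, the equation expresses $f(\xi_n)$ as a rational function in $\xi$ and $f(\xi)$ whose denominator is, up to bounded factors, a product of the resultants $\Delta(\xi_k)$ for $0\le k<n$; the hypothesis $\Delta(\xi_k)\ne 0$ guarantees that this rational expression is defined and lies in $L$. Consequently, for every $n$, an explicit algebraic integer multiple of $F_N(\xi_n)$ is an element of $\mathcal{O}_L$ whose house and denominator are controlled by bounds polynomial in $N$ and exponential in $d^n$. Now compare two estimates. On the one hand, the high-order zero of $F_N$ at $0$ together with $|\xi|<1$ yields an upper bound of shape $|F_N(\xi_n)|\le C^{d^n}\,|\xi|^{\gamma N^2 d^n}$. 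On the other, a Liouville-type inequality for nonzero elements of $L$ gives a lower bound of shape $|F_N(\xi_n)|\ge c^{d^n\cdot\operatorname{poly}(N)}$. Taking $N$ sufficiently large relative to $[L:\mathbb{Q}]$ and letting $n\to\infty$ forces a contradiction.

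The principal obstacle is the nonvanishing step: one must ensure that $F_N(\xi_n)\ne 0$ for infinitely many $n$, so that the Liouville inequality actually applies. This is handled by a zero estimate, arguing that if $F_N(\xi^{d^n})$ vanished for all large $n$, then $F_N(x)$ would have too many zeros relative to its growth and would be forced to be identically zero, producing an algebraic relation between $x$ and $f(x)$ over $K$ and contradicting the transcendence hypothesis on $f$. The other delicate point is the careful bookkeeping of heights under iteration of the functional equation: each iteration multiplies denominators by factors involving $\Delta(\xi_k)$, and balancing their growth against the decay produced by the high-order zero of $F_N$ at the origin is precisely where the hypothesis on the $\Delta(\xi_k)$ is consumed.
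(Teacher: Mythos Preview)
The paper does not prove this theorem. It is stated as a classical result of Mahler, quoted verbatim from Nishioka's monograph, and simply applied as a black box to deduce the transcendence of the Stoneham numbers. There is therefore no ``paper's own proof'' to compare against.

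Your sketch is nonetheless a faithful outline of Mahler's method and is essentially how the theorem is proved in the literature. Two small comments. First, the nonvanishing step is simpler than you indicate: since $\xi_n=\xi^{d^n}\to 0$ and $F_N$ is analytic in a neighbourhood of $0$, the identity theorem for analytic functions shows directly that if $F_N(\xi_n)=0$ for infinitely many $n$ then $F_N\equiv 0$; no growth-versus-zero-count argument is needed. Second, your bookkeeping is right in spirit but the dependence should be stated more carefully: iterating the functional equation $n$ times gives $f(\xi_n)$ as an element of $L$ whose house and denominator are bounded by $C^{d^n}$ for a constant $C$ depending on $\xi$, the $a_i$, $b_i$, and the $\Delta(\xi_k)$; plugging into $E_N$ of bidegree $(N,N)$ then yields house and denominator bounds of shape $C_1^{N d^n}$, so the Liouville lower bound is of the form $c^{[L:\mathbb{Q}]\,N d^n}$. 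Comparing this with your upper bound $C^{d^n}|\xi|^{\gamma N^2 d^n}$ and dividing through by $d^n$ gives a contradiction once $N$ is large enough relative to $[L:\mathbb{Q}]$ and $\log(1/|\xi|)^{-1}$, exactly as you say.
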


Since $F_c(x)$ is transcendental over $\mathbb{C}(x)$, $F_c(x)$ satisfies the functional equation \eqref{stonefunk}, and ${\rm Res}(cu-x^c,1)\neq 0$ for all $x$, we have the following corollary to Mahler's theorem.

\begin{corollary} Let $c\geqslant 2$ be an integer. The number $\sum_{n\geqslant 1}\frac{1}{c^n}\xi^{c^n}$ is transcendental for all algebraic numbers $\xi$ with $0<|\xi|<1.$ In particular, for all $b,c\geqslant 2$, the number Stoneham number $\alpha_{b,c}$ is transcendental.
\end{corollary}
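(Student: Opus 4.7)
The plan is to apply Theorem \ref{Mahler} directly to $F_c(x)=\sum_{n\geqslant 1}x^{c^n}/c^n$, since essentially every ingredient needed has already been assembled in the preceding discussion. I would take $K=\mathbb{Q}$, the dilation exponent $d=c$, and rewrite the Mahler functional equation $F_c(x^c)=cF_c(x)-x^c$ in the rational-function form required by the theorem with numerator polynomial $A(u)=cu-x^c$ and denominator polynomial $B(u)=1$. Then $m=1<c=d$, and the coefficients all lie in $\mathbb{Z}[x]\subset\mathbf{I}_K[x]$, so the shape hypothesis is satisfied. The resultant condition is essentially free: with $B(u)$ a nonzero constant, $\Delta(x)={\rm Res}(A,B)$ is itself a nonzero constant, and therefore $\Delta(\xi^{c^n})\neq 0$ for every $n\geqslant 0$ and every algebraic $\xi$.

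The remaining hypotheses of Theorem \ref{Mahler} follow from what has already been shown in the excerpt. The series $F_c(x)$ converges for $|x|<1$ (so $R=1$), lies in $\mathbb{Q}[[x]]$, and the paragraph immediately preceding the corollary has already established its transcendence over $\mathbb{C}(x)$ by the pole-counting argument against rationality followed by an appeal to Nishioka's Theorem \ref{KN}. Feeding all of this into Theorem \ref{Mahler}, I conclude that $F_c(\xi)$ is transcendental for every algebraic $\xi$ with $0<|\xi|<1$, which is the first assertion of the corollary.

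For the Stoneham specialization, I would take $\xi=1/b$ with $b\geqslant 2$ a fixed integer (or any integer $\geqslant 2$; only $|\xi|<1$ and algebraicity are needed). Since $\xi$ is rational and satisfies $0<|\xi|<1$, the first assertion applies, and a direct computation gives
\[
F_c(1/b)=\sum_{n\geqslant 1}\frac{1}{c^n b^{c^n}}=\alpha_{b,c},
\]
so $\alpha_{b,c}$ is transcendental, as claimed. I do not anticipate any real obstacle: once Theorem \ref{Mahler} is available, the corollary is a matching-of-hypotheses exercise, and the only genuinely non-routine ingredient (the transcendence of $F_c(x)$ over $\mathbb{C}(x)$) has already been carried out in the appendix.
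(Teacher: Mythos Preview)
Your proposal is correct and matches the paper's approach exactly: apply Theorem \ref{Mahler} to $F_c(x)$ with $d=c$, $A(u)=cu-x^c$, $B(u)=1$, noting that $\Delta(x)={\rm Res}(cu-x^c,1)$ is a nonzero constant, and then specialize $\xi=1/b$ to recover $\alpha_{b,c}$. The paper compresses all of this into a single sentence immediately preceding the corollary, but the content is identical to what you have written out.
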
 

\bibliographystyle{line}
\bibliography{JAMS-paper}
\providecommand{\bysame}{\leavevmode\hbox to3em{\hrulefill}\thinspace}
\providecommand{\MR}{\relax\ifhmode\unskip\space\fi MR }
\providecommand{\MRhref}[2]{%
  \href{http://www.ams.org/mathscinet-getitem?mr=#1}{#2}
}
\providecommand{\href}[2]{#2}

\end{document}